\theoremstyle{plain}
\newtheorem{thm}{Theorem}[section]
\newtheorem{lem}[thm]{Lemma}
\newtheorem{prop}[thm]{Proposition}
\newtheorem{cor}[thm]{Corollary}
\theoremstyle{definition}
\newtheorem{defn}[thm]{Definition}
\newtheorem{rem}[thm]{Remark}
\newcommand{\lMod}[1]{{{#1}\textrm{-}\mathbf{Mod}}} 
\newcommand{\rMod}[1]{\mathbf{Mod}\textrm{-}{#1}} 
\newcommand{\bimod}[3]{{_{#1}}{#2}_{#3}} 
\newcommand{\arrow}[1]{\stackrel{#1}{\longrightarrow}}
\def\N{\mathbb{N}}
\def\Z{\mathbb{Z}}
\def\k{\mathbb{k}}
\def\TT{\mathsf{T}}
\def\bb{\mathsf{b}}
\def\BB{\mathsf{B}}
\def\NN{\mathsf{N}}
\def\tt{\mathsf{t}}
\def\ss{\mathsf{s}}
\def\Ae{{A^{\mathsf{e}}}} 
\def\Aop{A^{\mathsf{op}}} 
\def\iso{\cong} 
\def\oco{\otimes \cdots \otimes}
\def\lla{\vartriangleright}
\def\lra{\vartriangleleft}
\def\rla{\blacktriangleright}
\def\rra{\blacktriangleleft}
\DeclareMathOperator{\Ext}{Ext}
\DeclareMathOperator{\Tor}{Tor}
\DeclareMathOperator{\id}{id}
\DeclareMathOperator{\im}{im}
\DeclareMathOperator{\Aut}{Aut}
\DeclareMathOperator{\Hom}{Hom}
\begin{document}
\title{Untwisting a Twisted Calabi-Yau Algebra}
\author{Jake Goodman }
\address{University of Glasgow, School of Mathematics and Statistics, University Gardens, Glasgow
G12 8QW, Scotland}
\email{j.goodman.1@research.gla.ac.uk}
\author{Ulrich Kr{\"a}hmer}
\email{ulrich.kraehmer@glasgow.ac.uk}
\begin{abstract} 
Twisted Calabi-Yau algebras are a generalisation of Ginzburg's notion of Calabi-Yau algebras. Such algebras $A$ come equipped with a \textit{modular automorphism} $\sigma \in \Aut(A)$, the case $\sigma = \id$ being precisely the original class of Calabi-Yau algebras. Here 
we prove that every twisted Calabi-Yau algebra may be extended to a Calabi-Yau algebra. More precisely, we show that if $A$ is a twisted Calabi-Yau algebra with modular automorphism $\sigma$, then the smash product algebras $A \rtimes_\sigma \N$ and $A \rtimes_\sigma \Z$ are Calabi-Yau.
\end{abstract}
\maketitle
\tableofcontents
\section{Introduction} 

Twisted Calabi-Yau algebras, that is, algebras $A$ that satisfy a twisted
Poincar\'e duality 
in Hochschild (co)homology, have been the object of intense recent study
due to their natural prevalence in various flavours of noncommutative geometry,
see e.g. \cites{BergerSolotar,BSW,BrownZhang,Dolgushev,Kr1,LR,LiuWu,LiuWangWu,ReyesRogalskiZhang}. More precisely, such algebras have isomorphic Hochschild homology and cohomology up to a twist in the coefficients
given by some algebra automorphism $ \sigma \in \mathrm{Aut}(A)$,   
\[H^\bullet(A,M) \iso H_{d-\bullet}(A,\bimod{\sigma}{M}{})\]
This generalises Ginzburg's notion of a Calabi-Yau algebra
\cite{Ginzburg} which cover the case where $\sigma = \id$.
See the main text for notation and precise definitions.

By analogy with the situation for Poisson manifolds \cite{Dolgushev} or 
in Tomita-Takesaki theory \cite{connes} 
it is natural to expect that
the smash product $ A \rtimes_\sigma \mathbb{Z} $ is an (untwisted)
Calabi-Yau algebra. Some partial results in that direction have been
obtained in, or can be derived from \cites{Farinati,LiuWangWu,ReyesRogalskiZhang}. However, to the best
of our knowledge no general proof of this fact has been
obtained yet. Specifically, we make no assumptions about the existence
of a grading on $A$, or about properties of the 
automorphism $ \sigma $, e.g. finiteness of order or more generally, semisimplicity.
Hence the aim of the present paper is to prove:  
\begin{thm}\label{maintheorem}
Let $A$ be a $\sigma$-twisted Calabi-Yau algebra of dimension $d$. Then $A \rtimes_\sigma \N$ and $A \rtimes_\sigma \Z$ are Calabi-Yau algebras of dimension $d+1$.
\end{thm}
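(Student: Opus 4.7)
The plan is to build a length-$(d+1)$ projective $B^{\mathsf{e}}$-resolution of $B := A \rtimes_\sigma \N$ (respectively $A \rtimes_\sigma \Z$) starting from a length-$d$ projective $A^{\mathsf{e}}$-resolution $P_\bullet \to A$, and then to compute $\Ext^*_{B^{\mathsf{e}}}(B, B^{\mathsf{e}})$ from it. Since $B$ is free as both a left and a right $A$-module on the monomials $\tt^n$, the induction functor $B \otimes_A - \otimes_A B$ from $A^{\mathsf{e}}$-modules to $B^{\mathsf{e}}$-modules is exact and preserves projectivity, so that $B \otimes_A P_\bullet \otimes_A B$ is already a projective $B^{\mathsf{e}}$-resolution of $B \otimes_A B$.

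The first key step is to establish a short exact sequence of $B$-bimodules
\[
0 \to B \otimes_A \bimod{\sigma^{-1}}{B}{} \xrightarrow{\phi} B \otimes_A B \xrightarrow{\mu} B \to 0,
\]
where $\mu$ is multiplication, $\phi(b \otimes b') = b\tt \otimes b' - b \otimes \tt b'$, and $\bimod{\sigma^{-1}}{B}{}$ denotes $B$ with its left $A$-action twisted by $\sigma^{-1}$, which is precisely what makes $\phi$ well-defined over $\otimes_A$. This is essentially the inflation along $A \hookrightarrow B$ of the standard length-$1$ resolution of $\k[\tt]$ (respectively $\k[\tt, \tt^{-1}]$) by its enveloping algebra. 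Taking the mapping cone of a chain-level lift of $\phi$ through the two projective resolutions $B \otimes_A P_\bullet \otimes_A B$ and $B \otimes_A P_\bullet \otimes_A \bimod{\sigma^{-1}}{B}{}$ produces a length-$(d+1)$ projective $B^{\mathsf{e}}$-resolution $Q_\bullet \to B$; in particular, $B$ is homologically smooth.

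The second key step is to apply $\Hom_{B^{\mathsf{e}}}(-, B^{\mathsf{e}})$ to $Q_\bullet$ and read off the cohomology. By the twisted Calabi-Yau property of $A$ together with flat base change along $A^{\mathsf{e}} \hookrightarrow B^{\mathsf{e}}$, each row of the resulting dual double complex vanishes except at homological degree $d$, where it yields the $\sigma$-twisted dualizing $A$-bimodule induced up to a $B$-bimodule. The differential coming from $\phi^{\vee}$ contributes one further degree of cohomology and, critically, the twist by $\sigma^{-1}$ of the source of $\phi$ cancels the twist by $\sigma$ of the Calabi-Yau dualizing bimodule of $A$, so that the top $\Ext$ group is untwisted: one obtains $\Ext^{d+1}_{B^{\mathsf{e}}}(B, B^{\mathsf{e}}) \iso B$ as $B$-bimodules, while $\Ext^i_{B^{\mathsf{e}}}(B, B^{\mathsf{e}}) = 0$ for $i \neq d+1$, which is exactly the Calabi-Yau condition for $B$ in dimension $d+1$.

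The $\Z$-case can be treated identically with $\k[\tt, \tt^{-1}]$ in place of $\k[\tt]$, or alternatively deduced from the $\N$-case by noting that $A \rtimes_\sigma \Z$ is the Ore localization of $A \rtimes_\sigma \N$ at the powers of $\tt$ and that such a localization preserves twisted Calabi-Yau structures with the same dualizing bimodule. The main obstacle will be the final cancellation of twists: verifying that the $\sigma$ introduced by the differential dual to $\phi$ is genuinely the inverse of the one in the Calabi-Yau dualizing bimodule of $A$ requires careful bookkeeping of left-versus-right $A$-actions, of how $\sigma$ and $\sigma^{-1}$ flow through each tensor factor, and of the signs coming from the mapping-cone construction used to splice the two resolutions into a single total complex.
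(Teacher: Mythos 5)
Your overall architecture --- splice the induced resolution $B \otimes_A P_\bullet \otimes_A B$ with its twisted copy along the short exact sequence $0 \to B\otimes_A \bimod{\sigma^{-1}}{B}{} \to B \otimes_A B \to B \to 0$, dualise, and handle the $\Z$-case by localisation --- is sound, and it is essentially how Farinati, \c{S}tefan and Liu--Wang--Wu establish that $B = A \rtimes_\sigma \N$ is homologically smooth with $\Ext^i_{B^{\mathsf{e}}}(B,B^{\mathsf{e}})$ concentrated in degree $d+1$; the paper quotes exactly this as Theorem~\ref{wigan} rather than re-deriving it, and your localisation step is its Section 4.2.

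The genuine gap is the final ``cancellation of twists,'' which you present as a matter of bookkeeping of sides and signs. What the dual complex actually produces is the automorphism of $U_A = H^d(A,\Ae)$ induced functorially by $\sigma$ (explicitly $[f]\mapsto[(\sigma^{-1})^{\otimes 2}\circ f\circ\sigma^{\otimes d}]$), and you must compare it with the abstract, non-canonical isomorphism $U_A \iso \bimod{}{A}{\sigma}$. Since any bimodule automorphism of $\bimod{}{A}{\sigma}$ is multiplication by a central unit $u$ of $A$, bookkeeping alone only shows that $B$ is \emph{twisted} Calabi--Yau with Nakayama automorphism fixing $A$ and sending $x \mapsto ux$ --- which is precisely where the earlier literature stops, as the paper notes. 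The paper closes this gap with a nontrivial homological input: the fundamental class $\omega_A \in H_d(A,\bimod{\sigma^{-1}}{A}{})$ is invariant under the paracyclic operator $\TT_d$ because the Connes--Tsygan homotopy formula $\bb\BB + \BB\bb = \id - \TT$ holds on any paracyclic module (Corollary~\ref{fixed}); capping with $\omega_A$ then identifies the induced right action of $x$ with $\sigma^{-1}\otimes\id$ on $A\otimes\k[x]$ exactly, so that $u=1$ after the untwisting bijection $a \otimes x^k \mapsto \sigma^k(a)\otimes x^k$. You need this invariance (or an equivalent argument) to conclude; without it your proof establishes only the twisted Calabi--Yau property of $B$, not the Calabi--Yau property.
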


The structure of the paper is as follows: In the first section, we provide some background on the Hochschild (co)homology of algebras and the cup and cap products. In Section 2, 
we recall a theorem of Van den Bergh on Poincar\'e type duality in Hochschild (co)homology and define twisted Calabi-Yau algebras. The third section describes twisted cyclic homology and the underlying notion of a paracyclic module of which the Hochschild complex $C_\bullet(A,\bimod{\sigma}{A}{})$ of an algebra $A$ with coefficients twisted by an algebra automorphism $\sigma$ is an example. We also show, as a consequence of the basic paracyclic 
theory, that the Hochschild homology $H_\bullet(A,\bimod{\sigma}{A}{})$ is invariant under the natural action of $\sigma$ for any algebra $A$ and $\sigma \in \Aut(A)$ which implies the invariance of the cohomology $H^\bullet(A,M)$ of a twisted Calabi-Yau algebra under the action of the modular automorphism. In Section 4, 
we introduce the smash products $A \rtimes_\sigma \N$ and $A \rtimes_\sigma \Z$ and recall a result of Farinati \cite{Farinati} which shows that the Calabi-Yau property of $A \rtimes_\sigma \Z$ is implied by that of $A \rtimes_\sigma \N$. The section is then concluded by the 
proof of the main theorem.

The second author acknowledges 
support by the EPSRC grant ``Hopf Algebroids
and Operads'' and the Newton Institute Cambridge which
provided a great environment for finishing this paper.

\section{Preliminaries}
\subsection{General Notations}
Fix a field $\k$ of characteristic zero. All algebras under consideration will be unital and associative $\k$-algebras. Unadorned $\otimes$ and $\Hom$ shall denote $\otimes_\k$ and $\Hom_\k$ respectively. If $A$ is an algebra, we shall denote by $\Aop$ and $\Ae:=A \otimes \Aop$ its opposite and enveloping algebras. The antipodal map 
\[S\colon \Ae \longrightarrow \Ae\] 
satisfying $S(a\otimes b)=(b \otimes a)$ is an involution and induces inverse functors \[S\colon\lMod{\Ae} \rightleftarrows \rMod{\Ae}\colon \! S\]
identifying the categories of left and right $\Ae$-modules. There are further equivalences between the categories of left and right $\Ae$-modules and the category of $A$-bimodules with a symmetric action of $\k$. If $M$ and $N$ are left (resp. right) $\Ae$-modules, we shall denote by $\lla$ and $\lra$ (resp. $\rla$ and $\rra$) the induced left and right actions of $A$. 
\subsection{Twisted Bimodules}
Let $M$ be an $A$-bimodule. For each pair of automorphisms $\rho,\sigma \in \Aut(A)$, we define the \textit{twisted bimodule} $\bimod{\rho}{M}{\sigma}$ to be the $\k$-space $M$ together with the $A$-bimodule structure:
\[a \cdot m \cdot b:= \rho(a)m\sigma(b),\quad a,b \in A,m \in M\]
If either $\rho$ or $\sigma$ is the identity, we shall supress it from the notation, writing for example $\bimod{}{M}{\sigma}$ instead of $\bimod{\id}{M}{\sigma}$. The following lemma is standard.
\begin{lem}
Let $\rho,\sigma,\tau \in \Aut(A)$ be automorphisms. Then
\begin{itemize}
\item[(i)] The map 
\[\bimod{\rho}{A}{\sigma} \arrow{} \bimod{\tau\circ\rho}{A}{\tau\circ\sigma}, \quad a \mapsto \tau(a)\] 
is an isomorphism of bimodules. In particular, $\bimod{}{A}{\sigma}\iso\bimod{\sigma^{-1}}{A}{}$.
\item[(ii)] The bimodules $A$ and $\bimod{}{A}{\sigma}$  are isomorphic if and only if $\sigma$ is an inner automorphism.
\end{itemize}
\end{lem}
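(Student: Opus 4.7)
For part (i), the plan is to verify directly that the map $a \mapsto \tau(a)$ is a bimodule homomorphism. Using that $\tau$ is an algebra automorphism, we compute
\[
\tau(a \cdot m \cdot b) = \tau(\rho(a)\, m\, \sigma(b)) = \tau\rho(a)\, \tau(m)\, \tau\sigma(b),
\]
which is exactly the twisted action of $a$ and $b$ on $\tau(m)$ viewed in $\bimod{\tau\circ\rho}{A}{\tau\circ\sigma}$. Bijectivity is immediate from $\tau \in \Aut(A)$. The corollary $\bimod{}{A}{\sigma}\iso \bimod{\sigma^{-1}}{A}{}$ then falls out by choosing $\rho = \id$ and $\tau = \sigma^{-1}$.

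For part (ii), I would handle the two implications separately. For the easy direction ($\Leftarrow$), suppose $\sigma(a) = u^{-1} a u$ for some unit $u \in A^\times$. Define $\phi \colon A \to \bimod{}{A}{\sigma}$ by $\phi(a) := au$. Left $A$-linearity is trivial, and right $A$-linearity with respect to the twisted action follows from
\[
\phi(a) \cdot b = (au)\sigma(b) = au\, u^{-1} b u = abu = \phi(ab),
\]
while $b \mapsto bu^{-1}$ provides the inverse. For the converse ($\Rightarrow$), let $\phi \colon A \to \bimod{}{A}{\sigma}$ be any bimodule isomorphism and set $u := \phi(1)$. Writing $\phi(a) = \phi(a\cdot 1) = a\,\phi(1)$ and $\phi(a) = \phi(1 \cdot a) = \phi(1)\cdot a$ and comparing gives $au = u\sigma(a)$ for all $a \in A$; once $u$ is known to be invertible, this is exactly $\sigma(a) = u^{-1} a u$.

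The only non-trivial point is therefore verifying that $u$ is a two-sided unit. Here I would exploit surjectivity of $\phi$: pick $v \in A$ with $\phi(v) = 1$. Expanding $\phi(v)$ both as $vu$ (left action) and as $u\sigma(v)$ (twisted right action) yields $vu = 1 = u\sigma(v)$, so $u$ has $v$ as a left inverse and $\sigma(v)$ as a right inverse. In any unital associative ring these must coincide, so $u \in A^\times$ with $u^{-1} = v = \sigma(v)$, completing the argument. I expect this invertibility check to be the only step that requires any care; everything else is formal manipulation of the twisted bimodule structures introduced just above.
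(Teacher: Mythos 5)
Your proof is correct. Note that the paper itself does not prove this lemma at all --- it declares it ``standard'' and defers to the reference [Brown--Zhang] --- so there is no in-text argument to compare against; your direct verification is exactly the sort of argument that reference supplies. Part (i) is the routine check you describe. In part (ii) you have correctly identified the one point that genuinely requires care, namely that $u=\phi(1)$ is a two-sided unit: from surjectivity you extract a left inverse $v$ and a right inverse $\sigma(v)$, and the standard monoid argument $v = v(u\sigma(v)) = (vu)\sigma(v) = \sigma(v)$ forces them to coincide, after which $au = u\sigma(a)$ rearranges to $\sigma(a)=u^{-1}au$. (The discrepancy between the conventions $\sigma(a)=u^{-1}au$ and $\sigma(a)=uau^{-1}$ for an inner automorphism is immaterial, since one passes between them by replacing $u$ with $u^{-1}$.) Nothing is missing.
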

\begin{proof}
See, for example, \cites{BrownZhang}.
\end{proof}
\subsection{Hochschild Homology}
The \textit{Hochschild homology} of an algebra $A$ with coefficients in a right $\Ae$-module $M$ is given by \[H_\bullet(A,M):=\Tor^{\Ae}_\bullet(M,A)\] and is realised as the homology of the \textit{Hochschild (chain) complex} $(C_\bullet(A,M),\mathsf{b}_\bullet)$ where \[C_n(A,M):= M \otimes A^{\otimes n}\] and the boundary map $\mathsf{b}_\bullet$ satisfies
\begin{align*}
\mathsf{b}_n(m \otimes a_1 \oco a_n) &= m \rra a_1 \otimes  a_2 \oco a_n \\
                                     & \quad + \sum_{i=1}^{n-1} (-1)^i m \otimes a_1 \oco a_ia_{i+1} \oco a_{n} \\
                                     & \quad + (-1)^n a_n \rla m \otimes a_1 \oco a_{n-1}.
\end{align*}
\subsection{Hochschild Cohomology}
The \textit{Hochschild cohomology} of an algebra $A$ with coefficients in a left $\Ae$-module $N$ is
\[H^\bullet(A,N):=\Ext_{\Ae}^\bullet(A,N)\]
and is realised as the cohomology of the \textit{Hochschild (cochain) complex} $(C^\bullet(A,N),\mathsf{b}^\bullet)$ where \[C^n(A,N):= \Hom_\k(A^{\otimes n}, N)\]
and the coboundary map 
$\mathsf{b^\bullet}$ satisfies
\begin{align*}
\mathsf{b}^n(f)(a_1 \otimes \cdots \otimes a_{n+1})  &= a_1 \lla f(a_2 \oco a_{n+1}) \\
                                            &+ \sum_{i=1}^n (-1)^i  f(a_1 \oco a_ia_{i+1} \otimes \cdots a_{n+1}) \\
                                            &+ (-1)^{n+1} f(a_1 \oco a_n) \lra a_{n+1}.
\end{align*}
\subsection{Cup Product}
The Hochschild cohomology $H^\bullet(A,A)$, \textit{a priori} just a $\k$-space, may also be considered as a graded commutative algebra whose multiplication 
\[\cup\colon H^\bullet(A,A)\otimes H^\bullet(A,A) \arrow{} H^\bullet(A,A),\]
called the \textit{cup product}, is induced by the associative product 
\[C^n(A,A)\otimes C^m(A,A)\arrow{} C^{n+m}(A,A),\qquad f \otimes g \mapsto f \cup g\]
 on the Hochschild complex $C^\bullet(A,A)$ where $f \cup g\colon A^{\otimes (n+m)}\arrow{} A$ is the map satisfying: 
\[(f\cup g)(a_1 \oco a_n \otimes a_{n+1} \oco a_{n+m}):=(-1)^{mn}f(a_1
\oco a_n)g(a_{n+1}\oco a_{n+m}).\]
\subsection{Cap Product}
The \textit{cap product} is the map
\[\cap\colon H_p(A,M)\otimes H^n(A,N) \arrow{} H_{p-n}(A,M\otimes_A N),\quad p \geq n\]
defined on the level of Hochschild (co)chains by
\[m \otimes a_1 \oco a_p \cap f= (-1)^{pn}m\otimes_A f(a_1 \oco a_n)\otimes a_{n+1} \oco a_{p}.\]
The tensor product $M \otimes_A N$ is formed using the right and left
actions $\bimod{}{M}{\rra}$ and $\bimod{\lla}{N}{}$ respectively and
is considered as a right $\Ae$-module using the remaining actions
$\bimod{\rla}{M}{}$ and $\bimod{}{N}{\lra}$.
\begin{rem}
Taking the cohomology coefficients in the above to be $N=A$, the cap product endows the Hochschild homology $H_\bullet(A,M)$ with the structure of a (graded) right $H^\bullet(A,A)$-module via
\[H_p(A,M)\otimes H^n(A,A) \arrow{\cap} H_{p-n}(A,M\otimes_A A) \arrow{\iso} H_{p-n}(A,M).\]
\end{rem}
\section{Duality and the Fundamental Class}
\subsection{Van den Bergh's Theorem}
The most general framework for Poincar\'e type duality in Hochschild (co)homology is provided by a well known theorem of Van den Bergh which we state below. First, we need the following definition.
\begin{defn}
An algebra $A$ is said to be \textit{homologically smooth} if there exists a finite length resolution of the $\Ae$-module $A$ by finitely generated projective $\Ae$-modules. 
\end{defn}
\begin{thm}(\cites{VdB})
Let $A$ be a homologically smooth algebra. Assume that there exists an integer $d \geq 0$ such that $\mathrm{Ext}^i_\Ae(A,\Ae) \iso 0$ for all $i \neq d$ and further assume that $U_A:= H^d(A,\Ae)$ is an invertible $\Ae$-module. Then, for all left $\Ae$-modules $M$, there are natural isomorphisms
\[H^\bullet(A,M)\iso H_{d-\bullet}(A,U_A \otimes_A M).\]
\end{thm}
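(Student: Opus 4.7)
The plan is to prove the duality by exhibiting a quasi-isomorphism, in the derived category of $\k$-spaces, between the Hochschild cochain and chain complexes carrying the appropriate shift and twist of coefficients. Homological smoothness provides a finite-length resolution $P_\bullet \to A$ by finitely generated projective $\Ae$-modules; all subsequent derived functors are computed from $P_\bullet$.

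For the cohomology side, form the dual complex $P^\vee_\bullet := \Hom_\Ae(P_\bullet,\Ae)$. This is a bounded complex of finitely generated projective \emph{right} $\Ae$-modules whose cohomology equals $\Ext^\bullet_\Ae(A,\Ae)$; by hypothesis this is $U_A$ in degree $d$ and zero elsewhere, so $P^\vee_\bullet$ is a projective representative of $U_A[-d]$ in the derived category of right $\Ae$-modules. The standard isomorphism $\Hom_\Ae(P,\Ae) \otimes_\Ae M \to \Hom_\Ae(P,M)$ for $P$ finitely generated projective, applied termwise, gives
\[\Hom_\Ae(P_\bullet,M) \iso P^\vee_\bullet \otimes_\Ae M.\]
Taking cohomology in degree $n$ and invoking the fact that $P^\vee_\bullet$ is a complex of projectives quasi-isomorphic to $U_A[-d]$ identifies
\[H^n(A,M) \iso H^n(P^\vee_\bullet \otimes_\Ae M) \iso \Tor^{\Ae}_{d-n}(U_A,M).\]

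To recognise $\Tor^\Ae_{d-n}(U_A,M)$ as Hochschild homology with coefficients in $U_A \otimes_A M$, I would use the bimodule balancing identity
\[N \otimes_\Ae L \iso (N \otimes_A L) \otimes_\Ae A,\]
valid for a right $\Ae$-module $N$ and left $\Ae$-module $L$, where $N \otimes_A L$ is given the outer bimodule structure exactly as in the definition of the cap product; this is purely formal from the universal properties of the two tensor products. The invertibility of $U_A$ is then crucial to upgrade this identity to a derived statement for $N = U_A$: it guarantees that $U_A \otimes_A -$ is an auto-equivalence of the category of $A$-bimodules, so any projective $\Ae$-resolution $Q_\bullet \to M$ is sent to a projective $\Ae$-resolution $U_A \otimes_A Q_\bullet \to U_A \otimes_A M$. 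Applying the balancing termwise to $Q_\bullet$ and passing to homology identifies $\Tor^\Ae_\bullet(U_A,M)$ with $\Tor^\Ae_\bullet(U_A \otimes_A M, A) = H_\bullet(A, U_A \otimes_A M)$, and combining with the previous paragraph produces the desired natural isomorphism.

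I expect the main subtlety to be the derived balancing step: the underived identity is formal, but its upgrade to the derived level relies on the specific observation that invertibility of $U_A$ makes $U_A \otimes_A -$ simultaneously exact and projective-preserving on bimodules, which is precisely what allows termwise balancing to yield a genuine projective resolution rather than merely an acyclic complex. Everything else is bookkeeping with bounded complexes of finitely generated projectives and the hom--tensor adjunction for finitely generated projective modules.
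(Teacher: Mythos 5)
The paper does not prove this theorem itself---it is quoted from Van den Bergh's article \cite{VdB}---so there is no internal proof to compare against; your argument is correct and is essentially the standard (indeed Van den Bergh's original) one. Namely: compute $\Ext^\bullet_\Ae(A,M)$ from a finite resolution by finitely generated projectives, dualise to identify it with $\Tor^\Ae_{d-\bullet}(U_A,M)$ via the K-projective complex $\Hom_\Ae(P_\bullet,\Ae)\simeq U_A[-d]$, and use that invertibility of $U_A$ makes $U_A\otimes_A-$ exact and projective-preserving so that the formal balancing $N\otimes_\Ae L\iso(N\otimes_A L)\otimes_\Ae A$ passes to the derived level, yielding $\Tor^\Ae_\bullet(U_A,M)\iso H_\bullet(A,U_A\otimes_A M)$.
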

\begin{rem}
When the conditions of the theorem are satisfied, the algebra $A$ is said to have \textit{Van den Bergh duality of dimension $d$} and the (right) $\Ae$-module $U_A:= H^d(A,\Ae)$ is called the \textit{dualising bimodule} of $A$. In this case, we necessarily have that $d$ is equal to the dimension $\dim(A)$ of $A$ which is, by definition, the projective dimension of $A$ as an $\Ae$-module (see \cites{CartanEilenberg}).
\end{rem}
\subsection{The Fundamental Class}
For an $n$-dimensional closed orientable manifold $M$, the Poincar\'e duality isomorphism \[H^\bullet(M)\iso H_{n-\bullet}(M)\] in singular cohomology is given by taking the cap product with a fundamental class $[M] \in H_n(M)$. In \cites{Lambre} and \cites{KoKr2} it is shown that the dualising isomorphism in the (co)homology of an algebra with Van den Bergh duality may be realised analogously.

First, we must define the fundamental class. Given any algebra $A$ and any 
integer $d \geq 0$, abbreviate $U_A:=H^d(A,\Ae)$ as in the statement of Theorem~{2.2} (however, for the moment we do not assume any of the further conditions to be satisfied). Then, the cap product
\[H_d(A,U_A)\otimes U_A \arrow{} H_0(A,U_A \otimes_A \Ae)\iso U_A\]
provides a $\k$-linear map
\[F\colon H_d(A,U_A) \arrow{} \Hom_\Ae(U_A,U_A), \quad F(z) = (z \cap -).\]
In \cite{Lambre}, it is proven that if $A$ has Van den Bergh duality of dimension $d$, then $F$ is an isomorphism. One then defines the \textit{fundamental class} of $A$ to be the unique element $\omega_A \in H_d(A,U_A)$ such that $F(\omega_A)=\id$. Using this terminology, we then have:
\begin{prop}
If $A$ is an algebra with Van den Bergh duality of dimension $d$ and $M$ is any left $\Ae$-module, then
\[\omega_A \cap -: H^\bullet(A,M) \arrow{} H_{d-\bullet}(A,U_A \otimes_A M)\]
is an isomorphism.
\end{prop}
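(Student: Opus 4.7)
The strategy is to identify $\omega_A \cap -$ as a natural transformation between two cohomological $\delta$-functors in the coefficient module $M$, verify it is an isomorphism at $M = \Ae$, and extend to arbitrary $M$ by a dimension-shifting argument.

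Both sides of the claimed iso are cohomological $\delta$-functors in $M$. The left side is manifestly so, being $\Ext^\bullet_{\Ae}(A, -)$. For the right side, the assumption that $U_A$ is an invertible $\Ae$-module forces $U_A \otimes_A -$ to be an exact functor, so $\Tor^{\Ae}_{d-\bullet}(U_A \otimes_A -, A)$ inherits a $\delta$-functor structure under the re-indexing $\bullet \mapsto d - \bullet$; moreover, both sides vanish outside $[0, d]$ by homological smoothness. A chain-level calculation shows that $\omega_A \cap -$ is natural in $M$ and compatible with the connecting morphisms, so it is a morphism of $\delta$-functors.

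Next, I verify the iso at $M = \Ae$. By the Van den Bergh hypothesis, $H^n(A, \Ae) = 0$ for $n \neq d$ and $H^d(A, \Ae) = U_A$. Invoking Theorem~2.2 (which only asserts the existence of some abstract iso $H^\bullet(A, \Ae) \iso H_{d-\bullet}(A, U_A \otimes_A \Ae)$) shows that the right side enjoys the same vanishing pattern with top value $U_A$. In degree $n = d$, the map $\omega_A \cap -\colon U_A \to U_A$ coincides with $F(\omega_A) = \id$ by the very definition of the fundamental class.

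For arbitrary $M$, I would extend the iso via a five-lemma and dimension-shifting argument. Free $\Ae$-modules are handled at once, since homological smoothness of $A$ implies both $\Ext^\bullet_{\Ae}(A, -)$ and $\Tor^{\Ae}_\bullet(-, A)$ commute with direct sums, so the case $M = \Ae$ upgrades to arbitrary free modules. For general $M$, choose a short exact sequence $0 \to K \to F \to M \to 0$ with $F$ free, obtain the comparison ladder of long exact sequences, and close the argument by the five lemma with a downward induction on the degree (using boundedness of both $\delta$-functors above by $d$ as the base case). The main obstacle, in my view, is the chain-level bookkeeping needed to confirm that cap product with $\omega_A$ is compatible with the $\delta$-functor structures on both sides; once this is in hand, the extension from $M = \Ae$ to arbitrary $M$ is routine homological algebra.
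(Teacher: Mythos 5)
Your proposal is correct, but it is not the paper's route: the paper offers no argument of its own for this proposition, simply citing \cite{Lambre} and \cite{KoKr2}, where the statement is obtained by identifying Van den Bergh's duality isomorphism (constructed in the derived category from the invertibility of $U_A$) with the cap product against $\omega_A$. What you give instead is the classical $\delta$-functor/dimension-shifting proof familiar from Poincar\'e duality for duality groups: both $H^\bullet(A,-)=\Ext^\bullet_{\Ae}(A,-)$ and $H_{d-\bullet}(A,U_A\otimes_A -)$ are cohomological $\delta$-functors in $M$ (the latter because invertibility of $U_A$ makes $U_A\otimes_A-$ exact, and the $\Tor$ connecting maps point the right way after reindexing), the comparison map is a morphism of $\delta$-functors, it is an isomorphism at $M=\Ae$ (degree $d$ being exactly $F(\omega_A)=\id$, the other degrees vanishing on both sides --- your appeal to Theorem~2.2 here is legitimate and not circular, since you only use the \emph{existence} of some isomorphism to get vanishing), free modules follow since homological smoothness makes both sides commute with arbitrary direct sums, and general $M$ follows by downward induction. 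This buys a self-contained, elementary argument at the price of some bookkeeping that the cited references absorb into the derived-category formalism.

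Two of the steps you defer deserve a word, though neither is a gap. First, the compatibility with connecting morphisms is genuinely routine once phrased correctly: a fixed cycle $z$ representing $\omega_A$ gives a chain map $z\cap-\colon C^\bullet(A,M)\arrow{} C_{d-\bullet}(A,U_A\otimes_A M)$ (up to the usual signs) that is natural in $M$, and both $C^\bullet(A,-)$ and $C_\bullet(A,U_A\otimes_A-)$ carry short exact sequences of coefficients to short exact sequences of complexes (since $\k$ is a field and $U_A\otimes_A-$ is exact), so commutation with the connecting maps up to sign is automatic. Second, in the inductive step the map for $K$ in degree $n$ is not yet known to be an isomorphism, so a single application of the five lemma does not close the argument; you should run the standard two-pass version (first prove surjectivity for all $M$ in degree $n$ by the four lemma, then injectivity), or equivalently note that surjectivity in degree $n$ for \emph{all} modules is what feeds the injectivity step. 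With these points made explicit, your proof is complete.
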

\begin{proof}
See \cite{Lambre} or \cite{KoKr2}.
\end{proof}
\begin{rem}
Taking $M=A$, the proposition says that $H_\bullet(A,U_A)$ is freely generated as a $H^\bullet(A,A)$-module by the fundamental class $\omega_A$.
\end{rem}
\subsection{Twisted Calabi-Yau Algebras}
In \cites{Ginzburg}, Ginzburg introduced the study of Calabi-Yau algebras, a noncommutative generalisation of the coordinate rings of Calabi-Yau varieties. Specifically, a Calabi-Yau algebra is an algebra $A$ with Van den Bergh duality whose dualising bimodule $U_A$ is isomorphic to $A$ as right $\Ae$-modules. More generally, we have:
\begin{defn}
An algebra $A$ is said to be \textit{$\sigma$-twisted Calabi-Yau} if it has Van den Bergh duality and the dualising bimodule $U_A$ is isomorphic to $\bimod{}{A}{\sigma}$ for some $\sigma \in \Aut(A)$. The automorphism $\sigma$ is variously called the \textit{modular} or \textit{Nakayama automorphism}\footnote{The term \textit{modular automorphism} was used in \cites{Dolgushev} since in the case of a deformation quantization of a Poisson variety, the modular automorphism quantizes the flow of the modular vector field of the Poisson structure whereas the authors in \cites{BrownZhang} use the term \textit{Nakayama automorphism} since for a Frobenius algebra, it coincides with the classical Nakayama automorphism.} of $A$. 
\end{defn}
\begin{rem}
Observe that the modular automorphism is only unique up to inner automorphisms. If $\sigma$ is the identity (or more generally an inner automorphism), then the algebra $A$ is a Calabi-Yau algebra in the sense of Ginzburg. 
\end{rem}
\begin{rem}If $A$ is a $\sigma$-twisted Calabi-Yau algebra, any choice of isomorphism $U_A \arrow{} \bimod{\sigma^{-1}}{A}{}$ identifies the fundamental class $\omega_A \in H_d(A,U_A)$ with an element of $H_d(A,\bimod{\sigma^{-1}}{A}{})$ which we shall refer to as \textit{a} fundamental class for $A$ and by abuse of notation, shall denote by $\omega_A$. 
\end{rem}
The twisted Calabi-Yau condition appears to have first been explicitly defined in \cites{BrownZhang} where the authors used the term \textit{rigid Gorenstein}. They showed that a wide class of noetherian Hopf algebras, including for example, the quantised function algebras $\mathcal{O}_q(G)$ of connected complex semisimple algebraic groups $G$, are what we now refer to as twisted Calabi-Yau. Other examples of twisted Calabi-Yau algebras include quantum homogeneous spaces such as Podle\'{s} quantum 2-sphere \cites{Kr1}, the deformation quantization algebra of a Calabi-Yau 
Poisson variety \cites{Dolgushev}, Koszul algebras whose Koszul dual is Frobenius \cites{VdB}, group algebras of Poincar\'e duality groups (e.g. the group algebras of fundamental groups of closed aspherical manifolds) \cites{Lambre} and AS-regular algebras \cites{YekZhang}.

\section{Twisted Cyclic Homology and the Paracyclic Structure of $C_\bullet(A,\bimod{\sigma}{A}{})$} 
\subsection{(Twisted) Cyclic Homology}
The cyclic cohomology of a noncommutative algebra was introduced in the early 1980s independently by Connes \cite{connes1} and Tsygan \cite{Tsygan}. Shortly afterwards, the corresponding theory of cyclic homology was formulated by Loday and Quillen \cite{LodayQuillen}. The cyclic homology of an algebra $A$ is based on the \textit{cyclic operator} \[\tt_\bullet\colon C_\bullet(A,A)\arrow{} C_\bullet(A,A)\] on the Hochschild complex whose $n$th degree component is given by the signed permutation 
\[a_0 \otimes a_1 \oco a_n \mapsto (-1)^n a_n \otimes a_0 \oco a_{n-1}.\]
This endows the simplicial module $C_\bullet(A,A)$ 
with the additional structure of a \textit{cyclic module}. Associated to this structure is \textit{Connes-Tsygan boundary map} $\BB\colon C_\bullet(A,A) \arrow{} C_{\bullet+1}(A,A)$ which anticommutes with the Hochschild boundary $\bb_\bullet$ and leads to Connes' mixed $(\bb,\BB)$-bicomplex whose total homology is the cyclic homology $HC_\bullet(A)$ of $A$. We refer the reader to \cites{Loday} for more detail and precise definitions.

In \cite{KMT}, in order to develop an appropriate analogue of the trace of a $C^\ast$-algebra for the compact quantum groups
of Woronowicz, the authors defined the twisted Hochschild and cyclic cohomologies of an algebra $A$ relative to an automorphism $\sigma \in \Aut(A)$. These extend the usual Hochschild and cyclic cohomology of $A$, which may then be viewed as the corresponding twisted theories relative to the automorphism $\sigma = \id$. 

The dual theories of twisted Hochschild and cyclic homology were then observed to be often less degenerate than the untwisted theories and in particular to avoid the so called `dimension drop' in Hochschild homology that occurs in examples of quantum deformations. 
More precisely, for $q=1$ the only automorphism for which the twisted Hochschild homology of $\mathcal{O}_q(SL_2)$ does not vanish in degree 3 is by the Hochschild-Kostant-Rosenberg theorem the identity. However,
for $q$ not a root of unity this happens precisely for the positive powers of the modular automorphism of $\mathcal{O}_q(SL_2)$, see \cite{HadKr}. This phenomenon has also been observed in similar examples, see e.g.~\cite{Sit}.

\subsection{Paracyclic $\k$-modules}
We now define the notion of a paracyclic module \cites{GetzlerJones} (introduced independently under the name `duplicial module' in \cite{DwyKa}) 
which generalises 
cyclic modules and underlies twisted cyclic homology. First, we recall the following:

\begin{defn}
A \textit{simplicial $\k$-module} is a graded $\k$-module $C_\bullet = \bigoplus_{n\in\N}C_n$ together with maps 
\[\mathsf{d}_{n,i}\colon  C_n \arrow{} C_{n-1}, \quad \mathsf{s}_{n,j}\colon  C_n \arrow{} C_{n+1}\]
for all $n \in \N$ and $0 \leq i,j \leq n$ such that
\[\mathsf{d}_{n-1,i}\mathsf{d}_{n,j}=\mathsf{d}_{n-1,j-1}\mathsf{d}_{n,i}, \quad i<j, \qquad \mathsf{s}_{n-1,i}\mathsf{s}_{n,j} = \mathsf{s}_{n-1,j+1}\mathsf{s}_{n,i}, \quad i \leq j,\]
\[\mathsf{d}_{n+1,i}\mathsf{s}_{n,j} = \begin{cases}\mathsf{s}_{n-1,j-1}\mathsf{d}_{n,i}, & i<j \\ \id, & i=j,j+1 \\ \mathsf{s}_{n-1,j}\mathsf{d}_{n,i-1}, & i >j+1. \end{cases}\]
\end{defn}
\begin{defn}
A \textit{paracyclic $\k$-module} is a simplicial $\k$-module $C_\bullet$ together with maps $\tt_n\colon C_n \arrow{} C_n$ for all $n\in \N$ satisfying
\[\mathsf{d}_{n,i}\mathsf{t}_n = \begin{cases}-\mathsf{t}_{n-1}\mathsf{d}_{n,i-1}, & 1 \leq i \leq n \\  (-1)^n \mathsf{d}_{n,n}, & i=0 \end{cases}, \qquad \mathsf{s}_{n,i}\mathsf{t}_n = \begin{cases}-\mathsf{t}_{n+1}\mathsf{s}_{n,i-1}, & 1 \leq i \leq n \\  (-1)^n\mathsf{t}^2_{n+1}\mathsf{s}_{n,n}, & i = 0.\end{cases}\]
A \textit{cyclic $\k$-module} is a paracyclic $\k$-module $C_\bullet$ such that $\mathsf{T}_n:=\mathsf{t}_n^{n+1} = \id$. In particular, for every paracyclic $\k$-module $C_\bullet$, we define the \textit{associated cyclic module} $C^\mathrm{cyc}_{\bullet} := C_\bullet/\im(\id - \TT_\bullet)$.
\end{defn} 
The motivating example of a paracyclic module is the Hochschild complex $C_\bullet(A,\bimod{\sigma}{A}{})$ associated to an algebra $A$ together with an automorphism $\sigma \in \Aut(A)$. Indeed, one easily checks the paracyclic relations to prove:
\begin{prop}
The Hochschild complex $C_\bullet(A,\bimod{\sigma}{A}{})$ together with the maps $(\mathsf{d}_{n,i},\mathsf{s}_{n,j},\mathsf{t}_n)$ satisfying
\[\mathsf{d}_{n,i}(a_0 \oco a_n) = \begin{cases} a_0 \oco a_ia_{i+1} \oco a_n, & 0\leq i < n \\ \sigma(a_n)a_0 \otimes a_1 \oco a_{n-1}, & i=n \end{cases}\]
\[\mathsf{s}_{n,i}(a_0 \oco a_n) = a_0 \oco a_i \otimes 1 \otimes a_{i+1} \oco a_n\]
\[\mathsf{t}_{n}(a_0 \oco a_n) = (-1)^n\sigma (a_n) \otimes a_0 \oco a_{n-1}\]
is a paracyclic module. \qed
\end{prop}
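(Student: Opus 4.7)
The proof is a direct verification that the stated operators $(\dd_{n,i},\ss_{n,j},\tt_n)$ satisfy the defining relations of a paracyclic $\k$-module. My plan is therefore to work through those relations systematically, splitting off the cases where the twist by $\sigma$ actually intervenes. The only algebraic inputs beyond the standard, untwisted computation are that $\sigma$ is unital and multiplicative.

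First I would verify the simplicial identities. The degeneracies $\ss_{n,j}$ insert the unit $1$ and are completely unaffected by $\sigma$; among the faces $\dd_{n,i}$, only $\dd_{n,n}$ differs from the untwisted Hochschild complex. Consequently most of the identities are literally those of the usual Hochschild complex. The remaining ones are those involving $\dd_{n,n}$: each reduces to the untwisted case once one notes that the factor $\sigma(a_n)$ produced by $\dd_{n,n}$ occupies a single tensor slot and can be moved around like any other entry. Unitality of $\sigma$ is implicit in the observation that $\ss_{n,j}$ produces the same element $1$ whether it is applied before or after a face involving $\sigma$.

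Next I would check the paracyclic identities. For each $i$ one computes $\dd_{n,i}\tt_n$ and $\ss_{n,i}\tt_n$ on $a_0 \oco a_n$ and compares with the right-hand sides in the definition. In the case $i=0$ of the first relation, $\dd_{n,0}\tt_n(a_0 \oco a_n)$ produces $(-1)^n\sigma(a_n)a_0 \otimes a_1 \oco a_{n-1}$, which is precisely $(-1)^n\dd_{n,n}(a_0 \oco a_n)$. For $1 \le i \le n-1$, both sides multiply the same adjacent pair $a_{i-1}a_i$ and keep $\sigma(a_n)$ in the leftmost slot, so the identities reduce to a sign check. The case $i=n$ is the unique point at which the multiplicativity of $\sigma$ is actually needed: one side yields $\sigma(a_{n-1})\sigma(a_n)$ while the other yields $\sigma(a_{n-1}a_n)$. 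The degeneracy-cyclic relations are entirely parallel, with the factor $\tt_{n+1}^2$ in the $i=0$ case accounting for the fact that the freshly inserted unit must be rotated past two positions rather than one.

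There is no conceptual obstacle; the proof is entirely mechanical, which is why the authors simply invoke it. The main care needed is combinatorial bookkeeping of the $(-1)^n$ signs. The sole non-trivial algebraic fact is that $\sigma$ is a unital algebra homomorphism, and it is used exactly at the interface between $\dd_{n,n}$ and $\tt_n$.
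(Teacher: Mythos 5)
Your proposal is correct and coincides with the paper's approach: the paper offers no written proof beyond the remark that ``one easily checks the paracyclic relations,'' and your systematic verification --- isolating the twisted face $\dd_{n,n}$, checking $\dd_{n,0}\tt_n=(-1)^n\dd_{n,n}$, using multiplicativity of $\sigma$ at $\dd_{n,n}\tt_n=-\tt_{n-1}\dd_{n,n-1}$, and unitality ($\sigma(1)=1$) in the degeneracy--cyclic relation at $i=0$ --- is exactly that omitted computation, with the relevant special cases correctly identified.
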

\subsection{The Homotopy Formula and Quasicyclic Modules}
For any paracyclic module $C_\bullet$ we define the following maps:
\begin{itemize}
\item[(i)] The \textit{simplicial} and \textit{acyclic boundaries}
\[\bb_n := \sum_{i=0}^{n} (-1)^i \mathsf{d}_{n,i} \quad \mbox{and} \quad \bb'_n := \sum_{i=0}^{n-1} (-1)^i \mathsf{d}_{n,i};\]
\item[(ii)] The \textit{norm operator}
\[\NN_n := \sum_{i=0}^n \mathsf{t}^i_n;\]
\item[(iii)] The \textit{extra degeneracy}
\[\ss_{n} := (-1)^{n+1} \mathsf{t}_{n+1}\mathsf{s}_{n,n};\]
\item[(iv)] The \textit{Connes-Tsygan boundary map} 
\[\mathsf{B}_n := (\id - \mathsf{t}_{n+1})\ss_n\mathsf{N}_n. \]
\end{itemize}
Unlike as is the case for cyclic modules, for a paracyclic module 
$\BB_\bullet$ does 
in general neither anticommute with the simplicial boundary $\bb_\bullet$, nor does it square to zero. However, we do have the following:
\begin{prop}[\cite{GetzlerJones}*{Theorem 2.3 (2)}]
For any paracyclic module $C_\bullet$, the simplicial boundary $\bb_\bullet$ and the Connes-Tsygan boundary map $\BB_\bullet$ satisfy the relations 
\[\bb_{n+1}\BB_n+\BB_{n-1}\bb_n = \id - \TT_n,\quad
\BB_{n+1}\BB_n=(\id - \TT_{n+2})(\id - \mathsf{t}_{n+2})\mathsf{s}_{n+1}\mathsf{s}_n \mathsf{N}_n.\]
\end{prop}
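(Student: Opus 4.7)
The plan is a direct algebraic verification from the paracyclic axioms of Definition 3.2. Both identities should reduce, via elementary rearrangements, to a small package of commutation relations between $\bb_\bullet$, $\bb'_\bullet$, $\ss_\bullet$, $\tt_\bullet$ and $\NN_\bullet$ that are in turn immediate consequences of the face and degeneracy relations for $\tt_\bullet$.

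The first step is to establish the auxiliary identities. Splitting off the $i = 0$ summand in $\bb_{n+1}$ and applying $\dd_{n+1,0}\tt_{n+1} = (-1)^{n+1}\dd_{n+1,n+1}$ together with $\dd_{n+1,i}\tt_{n+1} = -\tt_n\dd_{n+1,i-1}$ for $i \geq 1$ yields the commutation
\[\bb_{n+1}(\id - \tt_{n+1}) = (\id - \tt_n)\bb'_{n+1}.\]
An analogous computation with the degeneracy-cyclic relations, combined with the definition $\ss_n = (-1)^{n+1}\tt_{n+1}\ss_{n,n}$ of the extra degeneracy, shows that
\[\bb'_{n+1}\ss_n + \ss_{n-1}\bb'_n = \id,\]
so that $\ss_\bullet$ is a contracting chain homotopy for $\bb'_\bullet$. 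Finally, the elementary telescope $\NN_n(\id - \tt_n) = (\id - \tt_n)\NN_n = \id - \TT_n$ is immediate from $\NN_n = \sum_{i=0}^n \tt_n^i$ and $\TT_n = \tt_n^{n+1}$, and by iterating the basic commutation one obtains the dual relation $\NN_{n-1}\bb_n = \bb'_n\NN_n$ (easily checked by induction on $n$, or on a handful of low-degree cases).

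For the first stated identity, expanding $\BB_n = (\id - \tt_{n+1})\ss_n\NN_n$ and using the basic commutation to replace $\bb_{n+1}(\id - \tt_{n+1})$ by $(\id - \tt_n)\bb'_{n+1}$, together with $\NN_{n-1}\bb_n = \bb'_n\NN_n$ in the second term, collapses the sum to
\[\bb_{n+1}\BB_n + \BB_{n-1}\bb_n = (\id - \tt_n)\bigl(\bb'_{n+1}\ss_n + \ss_{n-1}\bb'_n\bigr)\NN_n,\]
which by the contracting homotopy and the telescope reduces to $(\id - \tt_n)\NN_n = \id - \TT_n$, as required. For the second identity, expanding $\BB_{n+1}\BB_n$ produces the inner factor $\NN_{n+1}(\id - \tt_{n+1}) = \id - \TT_{n+1}$, and the paracyclic commutation $\ss_{n+1}\TT_{n+1} = \TT_{n+2}\ss_{n+1}$ (itself obtained by pushing $\tt_{n+1}^{n+2}$ through $\ss_{n+1,n+1}$ using the relations in Definition 3.2) shifts this factor past $\ss_{n+1}$ to appear as $\TT_{n+2}$ on the outside, yielding the stated form.

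The main obstacle is pure bookkeeping of signs. The $(-1)^n$ factors appearing in $\dd_{n,0}\tt_n$, in $\ss_{n,0}\tt_n$, and in the definition $\ss_n = (-1)^{n+1}\tt_{n+1}\ss_{n,n}$ must interlock correctly for the telescopes to collapse so cleanly, and the verifications of $\NN_{n-1}\bb_n = \bb'_n\NN_n$ and $\ss_{n+1}\TT_{n+1} = \TT_{n+2}\ss_{n+1}$ from the basic commutations, while entirely routine, are the only places where one might trip. No deeper conceptual input is required; both identities are formal consequences of Definition 3.2, which is precisely the reason the paracyclic axioms are the right framework for twisted cyclic homology.
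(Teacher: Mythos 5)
Your proof is correct and follows essentially the same route as the paper, which establishes exactly the same three subsidiary relations $\bb_{n+1}(\id-\tt_{n+1})=(\id-\tt_n)\bb'_{n+1}$, $\bb'_{n+1}\ss_n+\ss_{n-1}\bb'_n=\id$ and $\bb'_n\NN_n=\NN_{n-1}\bb_n$ and then assembles the identities in the same way. You in fact supply slightly more detail than the paper does for the second identity, correctly isolating the extra commutation $\ss_{n+1}\TT_{n+1}=\TT_{n+2}\ss_{n+1}$ needed to move the telescoped factor to the outside.
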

\begin{proof}
One establishes the formula directly using the subsiduary relations
\[\bb_n(\id-\tt_n)  = (\id-\tt_{n-1})\bb'_n, \qquad \bb'_{n+1}\ss_{n} + \ss_{n-1}\bb'_n= \id, \qquad \bb'_n\NN_n = \NN_{n-1}\bb_n\]
which are easily verified by computation. 
\end{proof}
The proposition shows that for any paracyclic module $C_\bullet$, the images of $\bb_\bullet$ and $\BB_\bullet$ on the associated cyclic complex $C^\mathrm{cyc}_\bullet$ anticommute. The cyclic homology of $C_\bullet$ is then defined to be the total homology of Connes' mixed $(\bb,\BB)$-bicomplex 
\[\xymatrix{ \ar[d]_{\bb_\bullet}&\ar[d]_{\bb_\bullet} &\ar[d]_{\bb_\bullet} &\ar[d]_{\bb_\bullet} \\
C^\mathrm{cyc}_3 \ar[d]_{\bb_\bullet}&\ar[l]_{\BB_\bullet} C^\mathrm{cyc}_2 \ar[d]_{\bb_\bullet}& \ar[l]_{\BB_\bullet}C^\mathrm{cyc}_1 \ar[d]_{\bb_\bullet}&\ar[l]_{\BB_\bullet} C^\mathrm{cyc}_0 \\
C^\mathrm{cyc}_2 \ar[d]_{\bb_\bullet}&\ar[l]_{\BB_\bullet} C^\mathrm{cyc}_1 \ar[d]_{\bb_\bullet}& \ar[l]_{\BB_\bullet} C^\mathrm{cyc}_0 &                  \\
C^\mathrm{cyc}_1 \ar[d]_{\bb_\bullet}& \ar[l]_{\BB_\bullet} C^\mathrm{cyc}_0 &                  &                   \\
C^\mathrm{cyc}_0 &                  &                  &                   }\]
and the Hochschild homology is defined to be the homologies of the
columns. In case $C_\bullet = C_\bullet(A,\bimod{\sigma}{A}{})$, 
this is denoted by $HH^\sigma_\bullet(A)$. In general, the Hochschild homology differs from the simplicial homology of $C_\bullet$ (which is for $C_\bullet = C_\bullet(A,\bimod{\sigma}{A}{})$ the Hochschild homology $H_\bullet(A,\bimod{\sigma}{A}{})$) but they are isomorphic if $C_\bullet$ is \textit{quasicyclic} (see Definition 3.6).
One can also interpret Proposition 3.4 as the fact that $\BB_\bullet$ is a chain homotopy  $\TT_\bullet \simeq \id$ for the associated complex computing the simplicial homology underlying the paracyclic structure. 

Returning to the example of the Hochschild 
complex $C_\bullet(A,\bimod{\sigma}{A}{})$, the above proposition implies the following invariance property of the fundamental class of a twisted Calabi-Yau algebra which will be key to the proof of Theorem~\ref{maintheorem}:
\begin{cor}\label{fixed} 
If $A$ is a $\sigma$-twisted Calabi-Yau algebra of dimension $d$, then the fundamental class $\omega_A \in H_d(A,{}_{\sigma^{-1}} A)$ is invariant under the action of $\TT_d$, that is, if $z \in C_d(A,{}_{\sigma^{-1}}A)$ 
is any cycle representing $\omega_A$, then we have 
\[[\TT_d(z)]=\TT_d([z])=\TT_d(\omega_A)=\omega_A.\]
\end{cor}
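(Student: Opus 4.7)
The plan is to deduce the invariance directly from the homotopy formula of Proposition 3.4. Applying Proposition 3.3 with the automorphism $\sigma^{-1}$ in place of $\sigma$ endows $C_\bullet(A,\bimod{\sigma^{-1}}{A}{})$ with the structure of a paracyclic $\k$-module whose simplicial boundary is the usual Hochschild boundary $\bb_\bullet$ computing $H_\bullet(A,\bimod{\sigma^{-1}}{A}{})$. In this setting the cyclic operator $\TT_d$ acts on the chain group $C_d(A,\bimod{\sigma^{-1}}{A}{})$ in which our cycle $z$ lives.

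Next I would invoke Proposition 3.4 for this paracyclic module, which yields the identity
\[
\bb_{d+1}\BB_d + \BB_{d-1}\bb_d \;=\; \id - \TT_d
\]
on $C_d(A,\bimod{\sigma^{-1}}{A}{})$. Combining this with $\bb_d\bb_{d+1} = 0$ (applied either above or one degree lower) gives $\bb_d \TT_d = \TT_{d-1}\bb_d$, so $\TT_d$ is a chain map and therefore descends to a well-defined endomorphism of $H_d(A,\bimod{\sigma^{-1}}{A}{})$; this justifies the middle equality $[\TT_d(z)] = \TT_d([z])$ in the statement.

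Now let $z \in C_d(A,\bimod{\sigma^{-1}}{A}{})$ be any cycle representing $\omega_A$, so $\bb_d z = 0$. The homotopy formula then collapses to
\[
z - \TT_d(z) \;=\; \bb_{d+1}\BB_d(z),
\]
which is a boundary. Passing to homology gives $[\TT_d(z)] = [z] = \omega_A$, establishing the desired $\TT_d(\omega_A) = \omega_A$.

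There is no serious obstacle here: the result is essentially a formal consequence of the homotopy formula, and the twisted Calabi--Yau hypothesis enters only to guarantee that $\omega_A$ is a distinguished class in $H_d(A,\bimod{\sigma^{-1}}{A}{})$ via the identification $U_A \cong \bimod{\sigma^{-1}}{A}{}$ recorded in the preceding remark. The only point requiring any care is matching the twist conventions between the coefficient system $\bimod{\sigma^{-1}}{A}{}$ appearing in the corollary and the coefficient system $\bimod{\sigma}{A}{}$ used in Proposition 3.3, which is resolved by simply running that proposition with $\sigma^{-1}$ in place of $\sigma$.
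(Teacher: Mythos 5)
Your argument is correct and is exactly the route the paper takes: Corollary 3.5 is stated as an immediate consequence of the homotopy formula of Proposition 3.4 applied to the paracyclic module $C_\bullet(A,\bimod{\sigma^{-1}}{A}{})$, so that for a cycle $z$ one gets $z-\TT_d(z)=\bb_{d+1}\BB_d(z)$, a boundary. The paper leaves this unspoken; you have merely written out the details, including the (correct) observation that the homotopy formula itself forces $\TT_\bullet$ to be a chain map.
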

In general, it might not be possible to choose a representative of the fundamental class that is invariant under $\TT_\bullet$ on the chain level. However, if $C_\bullet(A,\bimod{\sigma^{-1}}{A}{})$ is a \textit{quasi-cyclic module} in accordance with the following definition, then it is possible to choose an invariant representative for $\omega_A$.
\begin{defn}[\cites{KoKr1}]
A paracyclic $\k$-module $C_\bullet$ is said to be \textit{quasi-cyclic} if
\[C_\bullet= \ker(\id - \TT_\bullet)\oplus \im(\id-\TT_\bullet).\]
\end{defn}
Indeed, for a quasicyclic module $C_\bullet$, it follows that the projection 
\[(C_\bullet,\bb_\bullet) \arrow{} (C_\bullet^\mathrm{cyc},\bb_\bullet)\]
of simplicial complexes is a quasi-isomorphism, so the homologies of the two complexes are isomorphic to each other and by construction also to the homology of the $\TT_\bullet$-invariants (see \cite{HadKr}*{Proposition 2.1}). 

The motivating example is the paracyclic module $C_\bullet(A,\bimod{\sigma}{A}{})$ associated to any algebra $A$ and automorphism $\sigma$ that acts semisimply on $A$. Indeed, if $\sigma$ is semisimple then 
so is $\TT_\bullet$ and the required splitting comes from the eigenspace decomposition of $C_\bullet(A,\bimod{\sigma}{A}{})$.
\section{The Smash Products $A \rtimes_\sigma \N$ and $A \rtimes_\sigma \Z$}
\subsection{Smash Product Algebras}
Let $A$ be an algebra and let $\sigma \in \Aut(A)$. Then $\sigma$ generates an action of the group algebra $\k[\Z]$ given by $x^i\cdot a:= \sigma^i(a)$ where we identify $\k[\Z]$ with the Laurent polynomials $\k[x^{\pm 1}]$. The \textit{smash product} or \textit{skew group algebra} $A \rtimes_\sigma \Z$ is the $\k$-vector space $A \otimes \k[x^{\pm 1}]$ with multiplication given by the rule
\[(a \otimes x^i)\cdot(b \otimes x^j):= a\sigma^i(b) \otimes x^{i+j}, \qquad a,b \in A \mbox{ and } i,j \in \Z.\]
The smash product or \textit{skew semigroup} algebra $A \rtimes_\sigma \N$ is the subalgebra of $A \rtimes_\sigma \Z$ consisting of all sums of pure tensors of the form $a \otimes x^k$ where $k \in \N$. 
\begin{rem}
For all smash products $A \rtimes_\sigma \N$, there is an isomorphism
\[A \rtimes_\sigma \N \arrow{\iso} A[x;\sigma], \quad a \otimes x^i \mapsto ax^i\]
where the \textit{skew polynomial algebra} $A[x;\sigma]$ is formed by adjoining the variable $x$ to $A$ subject to the commutation relation $xa = \sigma(a)x$. Similarly, the smash products $A \rtimes_\sigma \Z$ are isomorphic to the \textit{skew Laurent algebras} $A[x^{\pm 1};\sigma]$. Whenever it causes no confusion, we shall tacitly make use of these identifications writing, for example $x$ to denote $1 \otimes x \in A \rtimes_\sigma \N$.
\end{rem}
\begin{rem}
The smash products 
$A \rtimes_\sigma \Z$ are examples of the general notion of the smash product $A \rtimes H$ of a Hopf algebra $H$ with an $H$-module algebra $A$. We refer to \cite{Montgomery}*{Chapter 4.} for a full definition. This construction still makes sense if $H$ is only a bialgebra which justifies our use of terminology in calling $A \rtimes_\sigma \N$ a smash product.
\end{rem}
 
\subsection{Noncommutative Localisation}
The following is a short excursus, recalling a general notion of noncommutative localisation proposed by Farinati in \cites{Farinati}, of which the algebra extension $A \rtimes_\sigma \N \hookrightarrow A \rtimes_\sigma \Z$ is an example. It is also proven in \cites{Farinati} that Van den Bergh duality is stable under such localisation and furthermore the dualising module of a localisation is explicitly described. In particular, it follows that the Calabi-Yau property of $A \rtimes_\sigma \Z$ is implied by that of $A\rtimes_\sigma \N$.
\begin{defn}
A map of $\k$-algebras $A \arrow{} B$ is a \textit{localisation} (in the sense of Farinati) if
\begin{itemize}
\item[(i)] The map $A \arrow{} B$ is flat; that is $B \otimes_A -$ and $- \otimes_A B$ are exact functors
\item[(ii)] Multiplication in $B$ induces an isomorphism $B \otimes_A B \arrow{} B$ of $B^\mathsf{e}$-modules.
\end{itemize}
\end{defn}
\begin{thm}
Let $A$ be an algebra with Van den Bergh duality of dimension $d$ and dualising bimodule $U_A$ and let $A \arrow{} B$ be a localisation such that $B \otimes_A U_A \iso U_A \otimes_A B$ as $\Ae$-modules. Then, $B$ has Van den Bergh duality of dimension $d$ with dualising bimodule $B \otimes_A U_A \otimes_A B$. \qed
\end{thm}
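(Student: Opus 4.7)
The plan is to transfer Van den Bergh duality along the localisation $A \arrow{} B$, using the two axioms of Definition 4.3 together with the commutation hypothesis on $U_A$. I would first build a resolution of $B$ from one of $A$, then identify $\Ext^\bullet_{B^{\mathsf{e}}}(B, B^{\mathsf{e}})$ via an adjunction with $\Ext^\bullet_{\Ae}(A, -)$, and finally verify invertibility of the resulting dualising bimodule.

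For the resolution, start with a finite length resolution $P_\bullet \arrow{} A$ by finitely generated projective $\Ae$-modules (which exists by homological smoothness of $A$) and apply the base change functor $B \otimes_A - \otimes_A B$ termwise. The two-sided flatness axiom keeps the complex acyclic in positive degrees, and the augmentation $B \otimes_A A \otimes_A B \iso B \otimes_A B \iso B$, the last iso being the second localisation axiom, exhibits the complex as a resolution of $B$. Since $B \otimes_A \Ae \otimes_A B \iso B^{\mathsf{e}}$ as $B^{\mathsf{e}}$-modules, base change preserves finitely generated projectives, yielding a finite length resolution of $B$ by finitely generated projective $B^{\mathsf{e}}$-modules; so $B$ is homologically smooth of dimension at most $d$.

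Next, I would compute $\Ext^i_{B^{\mathsf{e}}}(B, B^{\mathsf{e}})$ using this resolution. The extension/restriction adjunction along $\Ae \arrow{} B^{\mathsf{e}}$ identifies $\Hom_{B^{\mathsf{e}}}(B \otimes_A P \otimes_A B, B^{\mathsf{e}}) \iso \Hom_{\Ae}(P, B^{\mathsf{e}})$, and for $P$ a finitely generated projective $\Ae$-module, using $B^{\mathsf{e}} \iso B \otimes_A \Ae \otimes_A B$ and the flatness of $B$ over $A$ gives
\[
\Hom_{\Ae}(P, B \otimes_A \Ae \otimes_A B) \iso B \otimes_A \Hom_{\Ae}(P, \Ae) \otimes_A B.
\]
Applying this termwise to $P_\bullet$ and taking cohomology, using flatness once more to commute with base change, produces $\Ext^i_{B^{\mathsf{e}}}(B, B^{\mathsf{e}}) \iso B \otimes_A \Ext^i_{\Ae}(A, \Ae) \otimes_A B$. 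These groups vanish for $i \neq d$ by the hypotheses on $A$, and for $i = d$ yield the candidate $U_B := B \otimes_A U_A \otimes_A B$.

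It remains to verify that $U_B$ is invertible over $B^{\mathsf{e}}$, and this is where the commutation hypothesis becomes essential. Let $V_A$ be the $\Ae$-inverse of $U_A$. Tensoring the isomorphism $B \otimes_A U_A \iso U_A \otimes_A B$ with $V_A$ on both sides and collapsing using $U_A \otimes_A V_A \iso A \iso V_A \otimes_A U_A$ yields the analogous commutation $B \otimes_A V_A \iso V_A \otimes_A B$. Setting $V_B := B \otimes_A V_A \otimes_A B$, I would then compute
\[
U_B \otimes_B V_B \iso B \otimes_A U_A \otimes_A B \otimes_A V_A \otimes_A B \iso B \otimes_A U_A \otimes_A V_A \otimes_A B \otimes_A B \iso B \otimes_A B \otimes_A B \iso B,
\]
and symmetrically for $V_B \otimes_B U_B$; the chain of isomorphisms uses, in order, collapse of the tensor product over $B$, the commutation for $V_A$, invertibility of $U_A$, and two applications of the localisation axiom $B \otimes_A B \iso B$. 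The main obstacle is this final step: it is the only place where both localisation axioms and the commutation hypothesis come into play simultaneously, and in particular the isomorphism $B \otimes_A B \iso B$ is precisely what is needed for the composition to produce $B$ rather than $B \otimes_A B$. Everything upstream is a formal consequence of flatness and the base-change formalism.
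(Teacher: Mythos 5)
Your argument is sound, but there is nothing in the paper to compare it against: this theorem is quoted from Farinati's \emph{Hochschild duality, localization, and smash products} and stated with an immediate \textsf{qed}, so the paper supplies no proof at all. What you have written is essentially the standard base-change proof (and, as far as I can tell, the one Farinati gives): push a finite projective $\Ae$-resolution of $A$ through $B\otimes_A-\otimes_A B$, use flatness and $B\otimes_A B\iso B$ to see it resolves $B$ by finitely generated projective $B^{\mathsf{e}}$-modules, identify $\Ext^\bullet_{B^{\mathsf{e}}}(B,B^{\mathsf{e}})$ with $B\otimes_A\Ext^\bullet_{\Ae}(A,\Ae)\otimes_A B$ via adjunction plus the finite-generation trick $\Hom_{\Ae}(P,\Ae)\otimes_{\Ae}M\iso\Hom_{\Ae}(P,M)$, and reserve the commutation hypothesis for invertibility. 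I checked the individual steps and found no gaps; in particular your derivation of $B\otimes_A V_A\iso V_A\otimes_A B$ from the hypothesis on $U_A$ is correct, and you rightly identify that the commutation is used only there. The one place deserving slightly more care in a written-up version is the adjunction step: the isomorphism $\Hom_{\Ae}(P,B^{\mathsf{e}})\iso B\otimes_A\Hom_{\Ae}(P,\Ae)\otimes_A B$ should be checked to be compatible with the residual right $B^{\mathsf{e}}$-module structures (it is, but the left/right bookkeeping with $\Aop$ and $B^{\mathsf{op}}$ is where sign-of-the-action errors typically hide), since the conclusion is an identification of $U_B$ as a right $B^{\mathsf{e}}$-module and not merely as a $\k$-space.
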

\begin{cor}
If $A \rtimes_\sigma \N$ is Calabi-Yau, then $A \rtimes_\sigma \Z$ is Calabi-Yau.
\end{cor}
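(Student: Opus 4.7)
The plan is to apply Theorem 4.3 directly to the algebra inclusion $R := A \rtimes_\sigma \N \hookrightarrow S := A \rtimes_\sigma \Z$, with $R$ playing the role of the Calabi--Yau base algebra. The argument then reduces to two separate points: verifying that this inclusion is a localisation in the sense of Definition 4.2, and checking that the compatibility hypothesis on the dualising bimodule in Theorem 4.3 is trivially satisfied.

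For the first point, I would use the presentation $R \cong A[x;\sigma]$ recalled in Remark 4.1. The element $x \in R$ is a non-zero-divisor (since $R$ is free as a left and right $A$-module with basis $\{x^n\}_{n \in \N}$), and the commutation rule $xa = \sigma(a)x$, together with its inverse $ax = x\sigma^{-1}(a)$, shows that $x$ is a normal element of $R$. Hence the multiplicative set $\{x^n\}_{n \in \N}$ satisfies the left and right Ore conditions, and the Ore localisation $R[x^{-1}]$ is canonically identified with $S = A[x^{\pm1};\sigma]$. From standard Ore theory, such a localisation is flat on both sides, so condition (i) of Definition 4.2 holds; and multiplication induces the isomorphism $S \otimes_R S \arrow{\iso} S$ required in (ii).

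For the second point, note that the hypothesis $S \otimes_R U_R \iso U_R \otimes_R S$ in Theorem 4.3 becomes vacuous as soon as $R$ is assumed Calabi--Yau: in that case $U_R \iso R$ as $R$-bimodules, and both $S \otimes_R R$ and $R \otimes_R S$ are canonically isomorphic to $S$. Consequently, Theorem 4.3 applies and yields that $S$ has Van den Bergh duality of the same dimension as $R$, with dualising bimodule
\[
S \otimes_R U_R \otimes_R S \,\iso\, S \otimes_R R \otimes_R S \,\iso\, S.
\]
This is exactly the Calabi--Yau condition for $S = A \rtimes_\sigma \Z$.

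The only genuinely substantive step is the verification that $R \hookrightarrow S$ is a localisation in Farinati's sense, and this is essentially routine Ore theory once one observes the normality of $x$; the rest is a tautological specialisation of Theorem 4.3 to the case where the smaller algebra is already Calabi--Yau. In view of this, the proof can be kept very short.
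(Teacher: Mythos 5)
Your proof is correct and takes essentially the same approach as the paper: both apply Theorem 4.3 to the inclusion $A \rtimes_\sigma \N \hookrightarrow A \rtimes_\sigma \Z$ and observe that for a Calabi--Yau base the compatibility hypothesis is automatic and the dualising bimodule collapses to $S \otimes_R S \iso S$. The only difference is that you verify the localisation property directly via Ore theory for the normal non-zero-divisor $x$, whereas the paper outsources this to Farinati's Example 8.
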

\begin{proof}
That $A \rtimes_\sigma \N \arrow{} A\rtimes_\sigma \Z$ is a localisation is \cite{Farinati}*{Example 8.}. The conclusion then follows from the description of dualising bimodule provided by the theorem.
\end{proof}
\subsection{The Calabi-Yau property of $A \rtimes_\sigma \N$.}
In this section we shall prove that the smash product $A\rtimes_\sigma \N$ of a $\sigma$-twisted Calabi-Yau algebra $A$ is Calabi-Yau using the following result:

\begin{thm}[\cite{LiuWangWu}*{Propositions 3.1. and 3.2.}]\label{wigan}
Let $A$ be a $\sigma$-twisted Calabi-Yau algebra of dimension $d$. Then the smash product $A \rtimes_\sigma \N$ is homologically smooth and
\[H^{\bullet}(A \rtimes_\sigma \N,(A \rtimes_\sigma \N)^\mathsf{e}) \iso \begin{cases} H^d(A,A\otimes\bimod{}{A}{\sigma^{-1}})\otimes \k[x], & \bullet = d+1 \\ 0, & \bullet \neq d+1\end{cases}\] 
as $(A \rtimes_\sigma \N)^\mathsf{e}$-modules where the actions of $A \rtimes_\sigma \N$ on $H^d(A,A\otimes\bimod{}{A}{\sigma^{-1}})\otimes \k[x]$ are given by
\begin{align*}
a \rla ([f] \otimes x^k) &= \sigma^{-k}(a)\cdot [f] \otimes x^k \\
x \rla ([f] \otimes x^k) &= [f] \otimes x^{k+1} \\
([f] \otimes x^k) \rra a &= [f] \cdot a \otimes x^k \\
([f] \otimes x^k) \rra x &= [(\sigma^{-1})^{\otimes 2} \circ f \circ \sigma^{\otimes d}] \otimes x^{k+1}.
\end{align*}
Here the actions of $A$ on $H^d(A,A\otimes\bimod{}{A}{\sigma^{-1}})$ are induced by those of the right $\Ae$-module structure on the coefficients $A\otimes\bimod{}{A}{\sigma^{-1}}$. 
\qed
\end{thm}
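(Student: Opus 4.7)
The plan is to prove both assertions simultaneously by constructing an explicit length-$(d+1)$ projective bimodule resolution of $B := A \rtimes_\sigma \N$ over $B^\mathsf{e}$ and reading off $\Ext^\bullet_{B^\mathsf{e}}(B, B^\mathsf{e})$ directly from it. The resolution has the structure of the total complex of a double complex pairing a length-$d$ bimodule resolution of $A$ (which exists by the homological smoothness of $A$) with a length-$1$ Koszul-type resolution in the $x$-direction, with the $\sigma$-twist carefully woven into how the two are stitched together.

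Concretely, fix a resolution $P_\bullet \arrow{} A$ of length $d$ by finitely generated projective $\Ae$-modules and induce to $B^\mathsf{e}$ via the functor $B \otimes_A (-) \otimes_A B$; the induced modules $\widetilde P_i := B \otimes_A P_i \otimes_A B$ are finitely generated projective $B^\mathsf{e}$-modules, since $B$ is free on both sides as an $A$-module (with basis $\{x^n\}_{n \geq 0}$). The key ingredient for lifting from $A$-resolutions to $B$-resolutions is the short exact sequence of $B^\mathsf{e}$-modules
\[0 \arrow{} B \otimes_A \bimod{\sigma^{-1}}{B}{} \arrow{\delta} B \otimes_A B \arrow{\mu} B \arrow{} 0,\]
where $\mu$ is the multiplication and $\delta(b \otimes b') := bx \otimes b' - b \otimes x b'$. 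The $\sigma^{-1}$-twist on the source is precisely what is required to compensate for the commutation $ax = x\sigma^{-1}(a)$ in $B$ and make $\delta$ well-defined; exactness is a direct check using the decomposition $B = \bigoplus_{n \geq 0} A x^n$. Splicing this sequence levelwise with the induced resolution $\widetilde P_\bullet$ produces a double complex of bi-length $d \times 1$, and its total complex is the sought length-$(d+1)$ projective $B^\mathsf{e}$-resolution of $B$, which establishes homological smoothness.

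To compute $\Ext^\bullet_{B^\mathsf{e}}(B, B^\mathsf{e})$, apply $\Hom_{B^\mathsf{e}}(-, B^\mathsf{e})$ to this resolution and use the induction-restriction adjunction
\[\Hom_{B^\mathsf{e}}(B \otimes_A M \otimes_A B, B^\mathsf{e}) \iso \Hom_\Ae(M, B^\mathsf{e})\]
to rewrite each term as an $\Ae$-Hom with coefficients in $B^\mathsf{e}$ (viewed as an $\Ae$-module via $A \hookrightarrow B$). The twisted Calabi--Yau hypothesis on $A$ forces $\Ext^\bullet_\Ae(A,-)$ to concentrate in degree $d$, and the $A$-bimodule decomposition $B = \bigoplus_n Ax^n$ (with right $A$-action on $Ax^n$ twisted by $\sigma^n$) yields
\[\Ext^d_\Ae(A, B^\mathsf{e}) \iso H^d(A, A \otimes \bimod{}{A}{\sigma^{-1}}) \otimes \k[x,y],\]
the two polynomial variables $x,y$ corresponding to the two tensor factors of $B^\mathsf{e}$. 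The dual of $\delta$ then acts as an appropriately twisted ``$x - y$'' map on this module, which is injective; its cokernel lives in total cohomological degree $d+1$ and is identified with $H^d(A, A \otimes \bimod{}{A}{\sigma^{-1}}) \otimes \k[x]$, while all other total degrees vanish.

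The four action formulas are finally verified by transporting the natural $B^\mathsf{e}$-action on $B^\mathsf{e}$ through the adjunction and through the cokernel identification. The main obstacle is the bookkeeping of the $\sigma$-twists: the twist on the source of $\delta$, its dual shifts, and the way these combine with the right $\sigma^n$-action on the summands $Ax^n$ to produce the asymmetric formulas of the statement, in particular the conjugation $[(\sigma^{-1})^{\otimes 2} \circ f \circ \sigma^{\otimes d}] \otimes x^{k+1}$ for the right $x$-action on cohomology classes.
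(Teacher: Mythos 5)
Your proposal is correct and is essentially the same argument as the one this paper relies on: the theorem is stated here without proof, being quoted from \cite{LiuWangWu}*{Propositions 3.1. and 3.2.}, and as the paper's concluding remark explains, the proof there is precisely a bicomplex computation for the skew polynomial ring $A[x;\sigma]\iso A\rtimes_\sigma\N$ based on \cite{Guccione}, namely an induced resolution $B\otimes_A P_\bullet\otimes_A B$ paired with the length-one twisted Koszul-type sequence in the $x$-direction, followed by the concentration-in-degree-$d$ and twisted ``$x-y$''-cokernel analysis you describe. The only point to tighten is that the left-hand column of your double complex must be induced via the twisted functor $B\otimes_A(-)\otimes_A\bimod{\sigma^{-1}}{B}{}$ rather than the untwisted one (these modules are still finitely generated projective over $B^{\mathsf{e}}$, since the twist does not affect the outer bimodule structure), which is part of the $\sigma$-bookkeeping you already flag.
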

\begin{rem}
It follows from this 
theorem that $A \rtimes_\sigma \N$ is twisted Calabi-Yau and that the modular automorphism extends the identity automorphism of $A$. Such automorphisms of $A \rtimes_\sigma \N$ are parameterised by central units $u$ of $A$ where the automorphism associated to $u$ satisfies $x \mapsto ux$ (cf. \cite{Farinati}*{Proposition. 22.}). It therefore remains to show that the modular automorphism is the one corresponding to $u=1$.
\end{rem}

\begin{proof}[Proof of Theorem~\ref{maintheorem}]
Fix an isomorphism $U_A \arrow{} \bimod{\sigma^{-1}}{A}{}$. Recall that the choice of such an isomorphism fixes a fundamental class $\omega_A \in H_d(A,\bimod{\sigma^{-1}}{A}{})$. The cap product with $\omega_A$ thus yields a $\k$-linear isomorphism
\[\xymatrix{H^d(A,A\otimes\bimod{}{A}{\sigma^{-1}})\otimes \k[x] \ar[rr]^-{(\omega_A \cap -) \otimes \id} & & H_0(A,\bimod{\sigma^{-1}}{A}{} \otimes \bimod{}{A}{\sigma^{-1}}) \otimes \k[x] \ar[rr]^-{([a \otimes b]\mapsto ba)\otimes \id}& & A \otimes \k[x].}\]
The $(A\rtimes_\sigma \N)^\mathsf{e}$-module structure on
$H^d(A,A\otimes\bimod{}{A}{\sigma^{-1}})\otimes \k[x]$ from 
Theorem~\ref{wigan}  
then induces a right action of $(A\rtimes_\sigma
\N)^\mathsf{e}$ on $A \otimes \k[x]$ by transport of structure: it
is clear that for all $(a \otimes x^k) \in A \otimes \k[x]$ and $b
\in A$, this action is given by
\begin{align*}
b \rla (a \otimes x^k) &= \sigma^{-k}(b)a \otimes x^k \\
x \rla (a \otimes x^k) &= a \otimes x^{k+1} \\
(a \otimes x^k) \rra b &= ab \otimes x^k
\end{align*}
together with the right action of $x$ on $A \otimes \k[x]$ that we shall now determine.

Pick a cycle $z \in C_d(A,\bimod{\sigma^{-1}}{A}{})$ such that $\omega_A=[z]$. Then, we have the commutative diagram of (co)chain complexes:
\[\xymatrix{C^\bullet(A,A\otimes A_{\sigma^{-1}})\otimes \k[x] \ar[d]_-{(\sigma^{-1})^{\otimes 2}\circ - \circ \sigma^{\otimes d}\otimes \id}\ar[rrr]^-{(z \cap -) \otimes \id}& & & C_{d-\bullet}(A,\bimod{\sigma^{-1}}{A}{} \otimes A_{\sigma^{-1}}) \otimes \k[x] \ar[d]^-{(\sigma^{-1})^{\otimes d-\bullet+2}\otimes\id} & \\
C^\bullet(A,A\otimes A_{\sigma^{-1}})\otimes \k[x] \ar[rrr]^-{(\TT_d(z) \cap -) \otimes \id}& & & C_{d-\bullet}(A,\bimod{\sigma^{-1}}{A}{} \otimes A_{\sigma^{-1}})\otimes \k[x]}\]
where $\k[x]$ is viewed as a (co)chain complex concentrated in degree
$0$ and $\TT_d(z) = (\sigma^{-1})^{\otimes d}(z)$ comes from the
paracyclic structure on $C_\bullet(A,\bimod{\sigma^{-1}}{A}{})$
described in Proposition 4.3. Now Corollary~\ref{fixed} gives  
\[[\TT_d(z)]=\omega_A,\] 
so the induced diagram on (co)homology takes in degree $d$ the form 
\[\xymatrix{ H^d(A,A\otimes A_{\sigma^{-1}})\otimes \k[x] \ar[d]_-{\rra x} \ar[rr]^-{(\omega_A \cap -)\otimes \id} & &  H_0(A,\bimod{\sigma^{-1}}{A}{} \otimes A_{\sigma^{-1}})\otimes \k[x]\ar[rr]^-{([a \otimes b]\mapsto ba)\otimes \id} & & A \otimes \k[x] \ar[d]^-{\sigma^{-1}\otimes \id}\\
\ar[rr]^-{(\omega_A \cap -)\otimes \id} H^d(A,A\otimes A_{\sigma^{-1}}) \otimes \k[x] &  &  H_0(A,\bimod{\sigma^{-1}}{A}{} \otimes A_{\sigma^{-1}}) \otimes \k[x] \ar[rr]^-{([a \otimes b]\mapsto ba)\otimes \id} & &A \otimes \k[x]}.\]
Consequently, the right action of $x$ on $A \otimes \k[x]$ is given by
\[(a \otimes x^k) \rra x = \sigma^{-1}(a) \otimes x^{k+1}.\]
Finally, the bijection 
\[A \otimes \k[x] \arrow{} A \rtimes_\sigma \N, \qquad a\otimes x^k \mapsto \sigma^{k}(a) \otimes x^k\]
yields an isomorphism $H^{d+1}(A \rtimes_\sigma \N,(A \rtimes_\sigma \N)^\mathsf{e})\iso A \rtimes_\sigma \N$ as right $(A \rtimes_\sigma \N)^\mathsf{e}$-modules thus completing the proof.
\end{proof}
\begin{rem}
In \cites{Farinati}, using a spectral sequence due to Stefan \cites{Stefan}, it is shown that if $H$ is a Calabi-Yau Hopf algebra and $A$ is an $H$-module algebra with Van den Bergh duality, then the smash product $A \rtimes H$ also has Van den Bergh duality although the argument used there does not 
completely determine the dualising bimodule $U_{A \rtimes H}$. Considering the special case where $H=\k[\Z]$ and $A$ is a $\sigma$-twisted Calabi-Yau algebra whose $H$-module structure is given by the modular automorphism $\sigma$, a straightforward adaptation of the main argument in the proof above shows 
that the terms $H^\bullet(A,(A\rtimes_\sigma \Z)^\mathsf{e})$ of the spectral sequence are invariant under $\sigma$, which can be used to fully determine the dualising module and show that it is indeed isomorphic to $A \rtimes_\sigma \Z$. The same result does not directly apply to smash products with the Calabi-Yau bialgebra $\k[\N]$ since in particular it does not 
satisfy the hypotheses of Stefan's spectral sequence. However, one can still use the arguments of \cites{Stefan} to show that a similar spectral sequence for smash products $A \rtimes_\sigma \N$ exists and then the proof in \cites{Farinati} proceeds in the same way. 

However, referring to Theorem~\ref{wigan} allowed us to formulate the proof in a shorter and more 
direct way. The results in \cite{LiuWangWu} show more generally that any Ore extension $A[x;\sigma, \delta]$ of a twisted Calabi-Yau algebra $A$ is also twisted Calabi-Yau. The authors prove this result by means of a bicomplex computing $A[x;\sigma, \delta]$, based on one from \cite{Guccione}. Similarly as with \cite{Farinati}, the proof does not explicitly determine the modular automorphism. In the case where $\delta=0$ so that the Ore extension $A[x;\sigma, \delta]$ is the skew polynomial ring $A[x;\sigma]\iso A \rtimes_\sigma \N$, this bicomplex is an explicit construction of the spectral sequence for $A \rtimes_\sigma \N$ along the lines of Stefan's one as mentioned above. Specialising further to the case where $\sigma$ is the modular automorphism of $A$, one obtains Theorem~\ref{wigan} as stated here.
\end{rem}
\begin{rem}
One interesting feature of the actions of $A \rtimes_\sigma \N$ on $A \otimes \k[x]$ is that they would appear to be more naturally stated as a left rather than right $(A \rtimes_\sigma \N)^\mathsf{e}$-module structure. Although this distinction may seem unimportant, it becomes more pronounced when one views Hochschild (co)homology as an instance of the (co)homology of Hopf algebroids as in \cite{KoKr2}. The duality isomorphism takes right cohomology modules and produces left modules in homology. We refer the interested reader to \cites{KoKr1,KoKr2} and the references contained therein for further details.
\end{rem}

\begin{bibdiv}
\begin{biblist}
\bib{BergerSolotar}{article}{
   author={Berger, Roland},
   author={Solotar, Andrea},
   title={A criteria for homogeneous potentials to be 3-Calabi-Yau},
   journal={Preprint, arXiV:math/1203.3029},
   date={2013},
}
\bib{BSW}{article}{
   author={Bocklandt, Raf},
   author={Schedler, Travis},
   author={Wemyss, Michael},
   title={Superpotentials and higher order derivations},
   journal={J. Pure Appl. Algebra},
   volume={214},
   date={2010},
   number={9},
   pages={1501--1522},
}
\bib{BrownZhang}{article}{
   author={Brown, Kenneth A.},
   author={Zhang, James J}
   title={Dualising complexes and twisted Hochschild (co)homology for Noetherian
Hopf algebras},
   journal={J. Algebra},
   volume={320},
   date={2008},
   pages={1814--1850},
   
}
\bib{CartanEilenberg}{book}{
   author={Cartan, Henri},
   author={Eilenberg, Samuel},
   title={Homological algebra},
   publisher={Princeton University Press},
   place={Princeton, N. J.},
   date={1956},
   pages={xv+390},
}
\bib{connes}{article}{
   author={Connes, Alain},
   title={Une classification des facteurs de type III},
   journal={Ann. Sci. ?Ecole Norm.
Sup.},
   volume={6},
   date={1973},
   pages={133--252},
   
}
\bib{connes1}{article}{
   author={Connes, Alain},
   title={Cohomologie cyclique et foncteurs ${\rm Ext}^n$},
   language={French, with English summary},
   journal={C. R. Acad. Sci. Paris S\'er. I Math.},
   volume={296},
   date={1983},
   number={23},
   pages={953--958},
}
\bib{Dolgushev}{article}{
   author={Dolgushev, Vasiliy},
   title={The Van den Bergh duality and the modular symmetry of a Poisson
   variety},
   journal={Selecta Math. (N.S.)},
   volume={14},
   date={2009},
   number={2},
   pages={199--228},
}

\bib{DwyKa}{article}{
   author={Dwyer, William G.},
   author={Kan, Daniel M.},
   title={Normalizing the cyclic modules of Connes},
   journal={Comment. Math. Helv.},
   volume={60},
   date={1985},
   number={4},
   pages={582--600},
}
	
\bib{Farinati}{article}{
   author={Farinati, Marco},
   title={Hochschild duality, localization, and smash products},
   journal={J. Algebra},
   volume={284},
   date={2005},
   number={1},
   pages={415--434},
}
\bib{GetzlerJones}{article}{
   author={Getzler, Ezra},
   author={Jones, John D. S.},
   title={The cyclic homology of crossed product algebras},
   journal={J. Reine Angew. Math.},
   volume={445},
   date={1993},
   pages={161--174},
}
\bib{Ginzburg}{article}{
   author={Ginzburg, Victor}
   title={Calabi-Yau Algebras},
   journal={Preprint, arXiV:math/0612139},
   date={2006},
}
\bib{Guccione}{article}{
   author={Guccione, Jorge A.},
   author={Guccione, Juan J.},
   title={Hochschild and cyclic homology of Ore extensions and some examples
   of quantum algebras},
   journal={$K$-Theory},
   volume={12},
   date={1997},
   number={3},
   pages={259--276},
   }
\bib{HadKr}{article}{
   author={Hadfield, Tom},
   author={Kr{\"a}hmer, Ulrich},
   title={Twisted homology of quantum ${\rm SL}(2)$},
   journal={$K$-Theory},
   volume={34},
   date={2005},
   number={4},
   pages={327--360},
}
\bib{KoKr2}{article}{
   author={Kowalzig, Niels},
   author={Kr{\"a}hmer, Ulrich},
   title={Duality and products in algebraic (co)homology theories},
   journal={J. Algebra},
   volume={323},
   date={2010},
   number={7},
   pages={2063--2081},
}   
\bib{KoKr1}{article}{
   author={Kowalzig, Niels},
   author={Kr{\"a}hmer, Ulrich},
   title={Batalin-Vilkovisky Structures on Ext and Tor},
   journal={to appear in J. Reine Angew. Math},  
}  
\bib{Kr1}{article}{
   author={Kr{\"a}hmer, Ulrich},
   title={On the Hochschild (co)homology of quantum homogeneous spaces},
   journal={Israel J. Math.},
   volume={189},
   date={2012},
   pages={237--266},
}
\bib{KMT}{article}{
   author={Kustermans, Johan},
   author={Murphy, Gerard. J.},
   author={Tuset, Lars},
   title={Differential calculi over quantum groups and twisted cyclic
   cocycles},
   journal={J. Geom. Phys.},
   volume={44},
   date={2003},
   number={4},
   pages={570--594},
}
\bib{Lambre}{article}{
   author={Lambre, Thierry},
   title={Dualit\'e de Van den Bergh et structure de Batalin-Vilkoviski\u\i\
   sur les alg\`ebres de Calabi-Yau},
   language={French, with English and French summaries},
   journal={J. Noncommut. Geom.},
   volume={4},
   date={2010},
   number={3},
   pages={441--457},
}
\bib{LR}{article}{
   author={Launois, St{\'e}phane},
   author={Richard, Lionel},
   title={Twisted Poincar\'e duality for some quadratic Poisson algebras},
   journal={Lett. Math. Phys.},
   volume={79},
   date={2007},
   number={2},
   pages={161--174},
}
\bib{LiuWu}{article}{
   author={Liu, L.-Y.},
   author={Wu, Q.-S.},
   title={Rigid dualizing complexes over quantum homogeneous spaces},
   journal={J. Algebra},
   volume={353},
   date={2012},
   pages={121--141},
}
\bib{LiuWangWu}{article}{
   author={Liu, L.-Y.},
   author={Wang, S.-Q.},
   author={Wu, Q.-S.}
   title={Twisted Calabi-Yau Property of Ore Extensions},
   journal={Preprint, arXiV:math/1205.0893},
   date={2012},
}

\bib{Loday}{book}{
   author={Loday, Jean-Louis},
   title={Cyclic homology},
   series={Grundlehren der Mathematischen Wissenschaften [Fundamental
   Principles of Mathematical Sciences]},
   volume={301},
   edition={2},
   note={Appendix E by Mar\'\i a O. Ronco;
   Chapter 13 by the author in collaboration with Teimuraz Pirashvili},
   publisher={Springer-Verlag},
   place={Berlin},
   date={1998},
   pages={xx+513},
   isbn={3-540-63074-0},
}
\bib{LodayQuillen}{article}{
   author={Loday, Jean-Louis},
   author={Quillen, Daniel},
   title={Homologie cyclique et homologie de l'alg\`ebre de Lie des
   matrices},
   language={French, with English summary},
   journal={C. R. Acad. Sci. Paris S\'er. I Math.},
   volume={296},
   date={1983},
   number={6},
   pages={295--297},
   
}
\bib{Montgomery}{book}{
   author={Montgomery, Susan},
   title={Hopf algebras and their actions on rings},
   series={CBMS Regional Conference Series in Mathematics},
   volume={82},
   publisher={Published for the Conference Board of the Mathematical
   Sciences, Washington, DC},
   date={1993},
   pages={xiv+238},
   isbn={0-8218-0738-2},
}
\bib{ReyesRogalskiZhang}{article}{
   author={Reyes, Manuel},
   author={Rogalski, Daniel},
   author={Zhang, James}
   title={Skew Calabi-Yau algebras and homological identities},
   journal={Preprint, arXiV:math/1302.0437},
   date={2013},
}

\bib{Sit}{article}{
   author={Sitarz, Andrzej},
   title={Twisted Hochschild homology of quantum hyperplanes},
   journal={$K$-Theory},
   volume={35},
   date={2005},
   number={1-2},
   pages={187--198},
}

\bib{Stefan}{article}{
   author={{\c{S}}tefan, Drago{\c{s}}},
   title={Hochschild cohomology on Hopf Galois extensions},
   journal={J. Pure Appl. Algebra},
   volume={103},
   date={1995},
   number={2},
   pages={221--233},
   
}
\bib{Tsygan}{article}{
   author={Tsygan, Boris L.},
   title={Homology of matrix Lie algebras over rings and the Hochschild
   homology},
   language={Russian},
   journal={Uspekhi Mat. Nauk},
   volume={38},
   date={1983},
   number={2(230)},
   pages={217--218},
}
\bib{VdB}{article}{
   author={van den Bergh, Michel},
   title={A relation between Hochschild homology and cohomology for
   Gorenstein rings},
   journal={Proc. Amer. Math. Soc.},
   volume={126},
   date={1998},
   number={5},
   pages={1345--1348, Erratum: Proc. Amer. Math. Soc. 130, no. 9, 2809?2810
(electronic) (2002)},
}
\bib{YekZhang}{article}{
   author={Yekutieli, Amnon},
   author={Zhang, James J.},
   title={Homological transcendence degree},
   journal={Proc. London Math. Soc. (3)},
   volume={93},
   date={2006},
   number={1},
   pages={105--137}
}
\end{biblist}
\end{bibdiv}

\end{document}